\documentclass[reqno]{amsart}


\usepackage[english]{babel}
\usepackage{csquotes}
\usepackage[style=numeric-comp,firstinits=true,backend=bibtex,maxnames=6,
]{biblatex}
\bibliography{Diagonal_Differential_Operators.bib}

\DeclareFieldFormat[article,book,incollection,phdthesis,misc]{volume}{\mkbibbold{#1}}
\DeclareFieldFormat[article]{publisher}{\textnormal{#1}}
\DeclareFieldFormat[article,book,phdthesis]{title}{\mkbibemph{#1}}
\setcounter{biburllcpenalty}{7000}
\setcounter{biburlucpenalty}{8000}
	
\usepackage{amsmath}
\usepackage{amsthm}
\usepackage{amsfonts}
\usepackage{amssymb}
\usepackage{mathrsfs}
\usepackage{enumerate}
\usepackage[usenames,dvipsnames]{xcolor}
\usepackage{graphicx}
\usepackage{tikz,caption}
\usetikzlibrary{arrows,shapes,trees}
\usepackage{verbatim}
\usepackage{url}
\usepackage[hyperindex,breaklinks]{hyperref}
\hypersetup{linkcolor=blue, citecolor=red, colorlinks=true}
\usepackage{array}
\usepackage[a4paper,margin=1in,footskip=0.25in]{geometry}

\newtheorem{thm}{Theorem}

\newtheorem{cor}[thm]{Corollary}

\theoremstyle{definition}
\newtheorem{defn}[thm]{Definition}
\newtheorem{rmk}[thm]{Remark}
\newtheorem{exmp}[thm]{Example}

\newtheorem{ques}[thm]{Question}


\providecommand{\ds}[0]{\displaystyle}

\providecommand{\R}[0]{\mathbb{R}}

\providecommand{\N}[0]{\mathbb{N}}

\newcommand{\bdf}{\begin{defn}}
\newcommand{\edf}{\end{defn}}

\newcommand{\brm}{\begin{rmk}}
\newcommand{\erm}{\end{rmk}}

\newcommand{\beq}{\begin{equation}}
\newcommand{\eeq}{\end{equation}}

\newcommand{\barr}[1]{\begin{array}{#1}}
\newcommand{\earr}{\end{array}}

\newcommand{\lr}[1]{\left(#1\right)}

\numberwithin{equation}{section}
\numberwithin{figure}{section}

\author{Robert D. Bates}
\date{\today}
\title{Diagonal Differential Operators}
\email{rdbates@math.hawaii.edu}

\begin{document}
\setcounter{page}{1}
\pagenumbering{roman}

\begin{abstract}
We explore differential operators, $T$, that diagonalize on a simple basis, $\{B_n(x)\}_{n=0}^\infty$, with respect to some sequence of real numbers, $\{a_n\}_{n=0}^\infty$, and sequence of polynomials, $\{Q_k(x)\}_{k=0}^\infty$, as in $ T[B_n(x)]:=\left(\sum_{k=0}^\infty Q_k(x) D^k\right)B_n(x)=a_n B_n(x)$ for every $n\in\N_0$. We discover new relationships between the sequence, $\{Q_k(x)\}_{k=0}^\infty$, and the sequence, $\{a_n\}_{n=0}^\infty$. We find new relationships between polynomial interpolated eigenvalues and the sequence, $\{\deg(Q_k(x))\}_{k=0}^\infty$. 
\end{abstract}

 \maketitle
\thispagestyle{empty}




\section{Introduction}

\setcounter{page}{1}
\pagenumbering{arabic}
 
A quintessential unsolved problem in the theory of differential operators, 
\beq\label{eq:diffrep}
T:= \sum_{k=0}^\infty Q_k(x)D^k,\ \text{where}\ D:=\frac{d}{dx},
\eeq
is to characterize the properties of the polynomials, $\{Q_k(x)\}_{k=0}^\infty$, so that $T$ preserves the class of polynomials with only real zeros (the Laguerre-P\'olya Class) (Definition \ref{defhp}), (see, for example, \cite{BB10,Cha11,BDFU11,BB09,BY13} and the references therein). However, the aforementioned problem is still open. We restrict our attention to differential operators that are diagonal with respect to an eigenvector sequence of polynomials, $\{B_n(x)\}_{n=0}^\infty$ ($\deg(B_n(x))=n$ for each $n\in\N_0$, $B_0(x)\not\equiv 0$), and eigenvalue sequence of real numbers, $\{a_n\}_{n=0}^\infty$, such that
\beq
T[B_n(x)]:=\left(\sum_{k=0}^\infty Q_k(x)D^k\right)B_n(x)=a_nB_n(x),\ \ \ \ \ n\in\N_0.  
\eeq
Our main result characterizes all possible eigenvalues of a diagonal differential operator (Theorem \ref{thm:dia2}), a generalization of A. Piotrowski \cite[Proposition 33, p. 35]{Pio07}. We show, in particular, that if the sequence $\{a_n\}_{n=0}^\infty$ cannot be interpolated by a polynomial (Definition \ref{def:interp}), then $T$ must be an infinite order (Definition \ref{order}) differential operator (Corollary \ref{cor:intercon}). We generalize a result of L. Miranian \cite{Mir05} and show for some eigenvector sequences, $\{B_n(x)\}_{n=0}^\infty$, that $T$ is a finite order diagonal differential operator if and only if the sequence, $\{a_n\}_{n=0}^\infty$, can be interpolated by a polynomial (Theorem \ref{finiffint}). Moreover, if $T$ is hyperbolicity preserving with positive increasing eigenvalues, then $\deg(Q_k(x))=k$ for every $k\in\N_0$ up to the degree of the polynomial that interpolates $\{a_n\}_{n=0}^\infty$. If the sequence, $\{a_n\}_{n=0}^\infty$, cannot be interpolated by a polynomial, then $\deg(Q_k(x))=k$ for all $k\in\N_0$ (Theorem \ref{final}). 

Our investigations are divided into two parts. First, we explore the properties of eigenvalues and eigenvectors in a diagonal differential operator. Second, we discuss these results in conjunction with hyperbolicity preserving operators and multiplier sequences.

\hspace{.5in}

\section{Diagonal Differential Operators}\label{section1}

\begin{defn}\label{diadiffop}
Let $T:\R[x]\to\R[x]$ be a linear operator. We say that $T$ is a \textit{diagonal differential operator}, if there exists a sequence of real numbers $\{a_n\}_{n=0}^\infty$, and a sequence of real polynomials, $\{B_n(x)\}_{n=0}^\infty$ ($\deg(B_n(x))=n$ for each $n\in\N_0$, $B_0(x)\not\equiv 0$), such that for every $n\in\N_0$, $T[B_n(x)]=a_n B_n(x)$. For clarity, we note that a diagonal differential operator, $T:\R[x]\to\R[x]$, is a linear operator with the additional assumption that there is an eigenvector of degree $n$, for each $n\in\N_0$. 
\end{defn}

\begin{defn}\label{def:interp}
Let $\{a_n\}_{n=0}^\infty$ be a sequence of real numbers. If there is a polynomial, $p(x)$, such that $a_n=p(n)$ for every $n\in\N_0$, then we say that $\{a_n\}_{n=0}^\infty$ is \textit{interpolated by a polynomial}. If no such polynomial can be found, then $\{a_n\}_{n=0}^\infty$ \textit{cannot be interpolated by a polynomial}. 
\end{defn}

\begin{defn}\label{order}
Let $T:\R[x]\to\R[x]$ be a linear operator of the form, 
\beq\label{eq:diffrep2}
T:=\sum_{k=0}^\infty Q_k(x)D^k, 
\eeq
where $\{Q_k(x)\}_{k=0}^\infty$ is some sequence of real polynomials. We say $T$ is a \textit{finite order} differential operator of order $n$, if $Q_n(x)\not\equiv 0$ and $Q_k(x)\equiv 0$ for $k>n$. Similarly, we say that $T$ is an \textit{infinite order} differential operator if $Q_k(x)\not\equiv 0$ for infinitely many $k\in\N_0$. 
\end{defn}

We begin this section with a slight extension of Peetre's 1959 result on linear operator representation \cite{Pee59} (see also \cite[p. 32]{Pio07}), which demonstrates that every linear operator, from $\R[x]$ to $\R[x]$, can be represented in the form of \eqref{eq:diffrep2}.  

\begin{thm}\label{piotr-thm1}
If $T:\R[x]\to\R[x]$ is any linear operator, then there is a unique sequence of polynomials, $\{Q_k(x)\}_{k=0}^\infty\subset\R[x]$, such that 
\begin{equation} \label{eq:linear-operator}
T=\sum_{k=0}^\infty Q_k(x) D^k,\ \mbox{where } D:=\frac{d}{dx}.
\end{equation}
Furthermore, given any sequence of polynomials, $\{B_n(x)\}_{n=0}^\infty$ \textup{(}$\deg(B_n(x))=n$ for each $n\in\N_0$, $B_0(x)\not\equiv 0$\textup{)}, then for each $n\in\N_0$, 
\beq\label{equ13}
Q_n(x)=\frac{1}{B_n^{(n)}}\left(T[B_n(x)]-\sum_{k=0}^{n-1} Q_k(x)B_n^{(k)}(x)\right). 
\eeq
\end{thm}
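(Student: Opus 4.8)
The plan is to establish existence and uniqueness separately, letting the explicit formula \eqref{equ13} fall out of the existence construction. Throughout, the crucial elementary observation is that the operator $\sum_{k=0}^\infty Q_k(x) D^k$ is well defined on $\R[x]$ no matter how many $Q_k$ are nonzero: applied to any polynomial of degree $d$, every term with $k>d$ annihilates the input, so the image is always a finite sum.

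For existence, I would fix a basis — the monomials $\{x^n\}_{n=0}^\infty$ suffice, though any sequence $\{B_n(x)\}_{n=0}^\infty$ with $\deg(B_n(x))=n$ works — and define the polynomials $Q_n(x)$ recursively by \eqref{equ13}. Because $\deg(B_n(x))=n$, the top derivative $B_n^{(n)}$ equals $n!$ times the nonzero leading coefficient of $B_n(x)$, hence is a nonzero constant, so the division in \eqref{equ13} is legitimate and each $Q_n(x)$ is a genuine polynomial. By construction the recursion is precisely the identity $T[B_n(x)]=\sum_{k=0}^{n} Q_k(x)B_n^{(k)}(x)$, and since $B_n^{(k)}(x)\equiv 0$ for $k>n$ this reads $T[B_n(x)]=\left(\sum_{k=0}^\infty Q_k(x) D^k\right)[B_n(x)]$ for every $n\in\N_0$. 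Thus $T$ and $S:=\sum_{k=0}^\infty Q_k(x) D^k$ agree on the basis $\{B_n(x)\}_{n=0}^\infty$; as both operators are linear and the degree condition makes the change of basis from the monomials triangular with nonzero diagonal, hence invertible, the family $\{B_n(x)\}_{n=0}^\infty$ spans $\R[x]$ and the two operators agree everywhere, giving $T=S$.

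For uniqueness, suppose $\sum_k Q_k(x) D^k=\sum_k \tilde Q_k(x) D^k$ and set $R_k(x):=Q_k(x)-\tilde Q_k(x)$, so that $\sum_k R_k(x) D^k$ is the zero operator. I would apply this operator to the monomials $1,x,x^2,\dots$ in turn and induct on $n$: testing on $x^n$ and using $D^k[x^n]=\frac{n!}{(n-k)!}x^{n-k}$ together with $R_0=\cdots=R_{n-1}\equiv 0$ from the inductive hypothesis leaves only the surviving term $R_n(x)\cdot n!$, which forces $R_n(x)\equiv 0$. Hence $Q_k(x)=\tilde Q_k(x)$ for all $k$, and the sequence is unique.

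The explicit formula \eqref{equ13} for an arbitrary basis is then immediate: for the now-unique sequence $\{Q_k(x)\}_{k=0}^\infty$ we have $T=\sum_k Q_k(x) D^k$, so applying both sides to any $B_n(x)$ and truncating at $k=n$ yields $T[B_n(x)]=\sum_{k=0}^{n} Q_k(x) B_n^{(k)}(x)$; isolating the $k=n$ term and dividing by the nonzero constant $B_n^{(n)}$ recovers \eqref{equ13}. The argument is essentially routine; the only points demanding care are verifying that each $B_n^{(n)}$ is a nonzero constant so the recursion never divides by zero, and organizing the uniqueness induction so that testing against $x^n$ isolates a single $R_n(x)$ at each stage.
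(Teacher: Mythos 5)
Your proposal is correct and follows essentially the same route as the paper: existence via the recursive definition \eqref{equ13} on a fixed basis together with linearity, and uniqueness by testing against the monomials $1, x, x^2, \dots$ and inducting on degree (your difference operator $R_k(x):=Q_k(x)-\tilde Q_k(x)$ is just a repackaging of the paper's coefficient comparison). Your added care in noting that $B_n^{(n)}$ is a nonzero constant and that \eqref{equ13} for an arbitrary basis follows from uniqueness makes explicit what the paper leaves implicit, but the argument is the same.
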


\begin{proof}
We begin by defining the sequence of polynomials, $\{Q_n(x)\}_{n=0}^\infty$, by the recursive formula of \eqref{equ13}. Hence, solving for $T[B_n(x)]$ in \eqref{equ13}, yields, 
\beq
T[B_n(x)]=\left(\sum_{k=0}^n Q_k(x) D^k \right) B_n(x), 
\eeq
for each $n\in\N_0$. Thus, define the operator, 
\beq
\tilde{T}:=\sum_{k=0}^\infty Q_k(x) D^k, 
\eeq
and note that $T[B_n(x)]=\tilde{T}[B_n(x)]$ for every $n\in\N_0$. By linearity, we conclude that $T[p(x)]=\tilde{T}[p(x)]$ for every $p(x)\in\R[x]$. Hence, $T$ can be represented by the differential operator \eqref{eq:linear-operator} where the $Q_k$'s are given by the recursive formula in \eqref{equ13}. We now establish uniqueness. Suppose there are sequences of real polynomials, $\{Q_k(x)\}_{k=0}^\infty$ and $\{\tilde{Q}_k(x)\}_{k=0}^\infty$, such that 
\beq
T=\sum_{k=0}^\infty Q_k(x)D^k\ \ \ \text{and}\ \ \ T=\sum_{k=0}^\infty \tilde{Q}_k(x)D^k. 
\eeq
Thus, 
\beq
\sum_{k=0}^{\deg(p(x))} Q_k(x)p^{(k)}(x) = \sum_{k=0}^{\deg(p(x))} \tilde{Q}_k(x) p^{(k)}(x), 
\eeq
for every $p(x)\in\R[x]$. For $p(x)= 1$, this gives $Q_0(x)=\tilde{Q}_0(x)$. For $p(x)=x$, this gives $Q_0(x)x+Q_1(x) = \tilde{Q}_0(x)x+\tilde{Q}_1(x)$ and hence, $Q_1(x)=\tilde{Q}_1(x)$. We continue by induction and conclude that $Q_n(x)=\tilde{Q}_n(x)$ for every $n\in\N_0$. 
\end{proof}

The next two theorems include some combinatorial facts that will be useful. 

\begin{thm}[{\cite[p. 49]{Rio79}}]\label{comb1}
If $\{a_n\}_{n=0}^\infty$ is a sequence of real numbers and we define for each $n\in\N_0$, 
\beq
c_n=\sum_{k=0}^n \binom{n}{k} a_k, 
\eeq
then for each $n\in\N_0$, 
\beq
a_n=\sum_{k=0}^n \binom{n}{k} c_k (-1)^{n-k}.  
\eeq
\end{thm}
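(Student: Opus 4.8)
The plan is to prove this binomial inversion formula by direct substitution, reducing it to the elementary orthogonality relation for binomial coefficients. Starting from the claimed right-hand side $\sum_{k=0}^n\binom{n}{k}(-1)^{n-k}c_k$, I would substitute the defining relation $c_k=\sum_{j=0}^k\binom{k}{j}a_j$ to obtain a double sum over $k$ and $j$, and then interchange the order of summation so that the outer index becomes $j$ and the inner index $k$ ranges from $j$ to $n$. The goal is to show that the inner sum collapses to a Kronecker delta in $n$ and $j$, so that only the term $j=n$ survives and yields $a_n$.

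The key step is the \emph{subset-of-a-subset} identity $\binom{n}{k}\binom{k}{j}=\binom{n}{j}\binom{n-j}{k-j}$, which lets me factor the fixed quantity $\binom{n}{j}$ out of the inner sum. After the reindexing $i=k-j$, the inner sum becomes $\binom{n}{j}\sum_{i=0}^{n-j}\binom{n-j}{i}(-1)^{n-j-i}$, and the remaining sum is exactly the binomial expansion of $(-1+1)^{n-j}=0^{n-j}$. This equals $0$ whenever $n>j$ and equals $1$ when $n=j$, which is precisely the orthogonality relation needed.

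Since the only nonvanishing contribution comes from $j=n$, where $\binom{n}{n}=1$, the entire double sum reduces to $a_n$, completing the argument. I would present the computation as a short chain of displayed equalities, taking care that the index interchange is valid (the summation region $0\le j\le k\le n$ is finite) and that the edge cases $n=0$ and $j=n$ are handled by the convention $0^0=1$.

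I do not anticipate a genuine obstacle, since the identity is classical; the only point demanding care is the bookkeeping of summation limits during the interchange and reindexing. An alternative, equally short route uses exponential generating functions: the relation $c_n=\sum_{k=0}^n\binom{n}{k}a_k$ is equivalent to $C(x)=e^x A(x)$ for the exponential generating functions $A(x)=\sum_{n=0}^\infty a_n x^n/n!$ and $C(x)=\sum_{n=0}^\infty c_n x^n/n!$, whence $A(x)=e^{-x}C(x)$ expands termwise into the claimed formula. I would favor the direct combinatorial computation as the more self-contained of the two.
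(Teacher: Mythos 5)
Your proof is correct. Note, however, that the paper does not actually prove this statement at all: Theorem \ref{comb1} is quoted directly from Riordan's book (the citation on p.~49), so there is no internal argument to compare yours against. Your direct computation --- substituting the definition of $c_k$, interchanging the finite double sum, applying $\binom{n}{k}\binom{k}{j}=\binom{n}{j}\binom{n-j}{k-j}$, and collapsing the inner sum to $(1-1)^{n-j}$ --- is the standard binomial-inversion argument, and every step you flag (the finite summation region, the $0^0=1$ convention at $j=n$) is handled correctly. It is worth observing that your alternative route via exponential generating functions, $A(x)=e^{-x}C(x)$, is in fact the device the paper itself uses later in the proof of Theorem \ref{final}, where the function $e^{-x}\sum_{k\ge 0}\frac{a_k}{k!}x^k$ is expanded termwise to produce exactly the inverted coefficients $Q_k^{(k)}/k!$; so either of your two proofs would integrate naturally into the paper, with the generating-function version making that later connection more transparent and the combinatorial version being the more self-contained.
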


\begin{thm}\label{comb2}
Let $p(x)$ be a real polynomial and for each $n\in\N_0$ set
\beq
a_n=\sum_{k=0}^n \binom{n}{k} p(k) (-1)^{n-k}.  
\eeq
If $n>\deg(p(x))$, then $a_n=0$. If $n=\deg(p(x))$, then $a_n=n!p^{(n)}\not=0$.
\end{thm}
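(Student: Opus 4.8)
The plan is to recognize the quantity $a_n=\sum_{k=0}^n\binom{n}{k}(-1)^{n-k}p(k)$ as the $n$-th forward finite difference of $p$ evaluated at the origin. Introducing the difference operator $\Delta$ defined by $(\Delta f)(x):=f(x+1)-f(x)$, a routine induction on $n$ gives the standard identity $(\Delta^n f)(x)=\sum_{k=0}^n\binom{n}{k}(-1)^{n-k}f(x+k)$, so that $a_n=(\Delta^n p)(0)$. The entire statement then reduces to understanding how $\Delta$ acts on the degree and leading coefficient of a polynomial.

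The key lemma I would isolate is the following: if $q(x)$ is a polynomial of degree $d\ge 1$ with leading coefficient $c$, then $\Delta q$ has degree exactly $d-1$ with leading coefficient $dc$; and if $q$ is constant, then $\Delta q\equiv 0$. This is immediate from the expansion $\Delta(x^d)=(x+1)^d-x^d=d\,x^{d-1}+(\text{lower order terms})$ together with the linearity of $\Delta$: the top-degree terms cancel, while the next term survives with the advertised coefficient.

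Iterating this lemma on $p$, which has degree $d:=\deg(p)$ and leading coefficient $c_d\neq 0$, I would prove by induction that for $0\le j\le d$ the polynomial $\Delta^j p$ has degree $d-j$ and leading coefficient $\frac{d!}{(d-j)!}\,c_d$. Taking $j=d$ shows that $\Delta^d p$ is the constant $d!\,c_d$, and one further application (the constant case of the lemma) gives $\Delta^{d+1}p\equiv 0$, whence $\Delta^n p\equiv 0$ for all $n>d$. Evaluating at $x=0$ settles both claims simultaneously: for $n>d$ we obtain $a_n=(\Delta^n p)(0)=0$, and for $n=d$ we obtain $a_n=(\Delta^n p)(0)=n!\,p^{(n)}$, where $p^{(n)}$ is the leading coefficient $c_n$ of $p$. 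This value is nonzero precisely because the hypothesis $\deg(p)=n$ forces $c_n\neq 0$.

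I expect no serious obstacle, as this is a classical fact about finite differences of polynomials; the only point requiring care is the inductive bookkeeping of the leading coefficient, where one must check that no cancellation annihilates the surviving top-degree term before the degree is exhausted, together with the final observation that the resulting constant is genuinely nonzero. As an alternative one could apply Theorem \ref{comb1} with $c_k:=p(k)$ to deduce the inversion $p(n)=\sum_{k=0}^n\binom{n}{k}a_k$, but that identity still leaves open exactly when $a_n$ vanishes, so the finite-difference argument above seems the most economical route.
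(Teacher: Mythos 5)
Your proof is correct, but it takes a genuinely different route from the paper. The paper reduces to the monomials $x^m$ by linearity and evaluates $\sum_{k=0}^n \binom{n}{k} k^m (-1)^{n-k}$ as $(-1)^n\left.(xD)^m(1+x)^n\right|_{x=-1}$, arguing that each application of $xD$ lowers the multiplicity of the zero at $x=-1$ by exactly one, so the value is $0$ for $0\le m<n$ and $n!$ for $m=n$; summing against the coefficients of $p$ then settles both cases at once. Your argument instead identifies $a_n=(\Delta^n p)(0)$ with $\Delta=E-I$ and iterates the lemma that $\Delta$ drops the degree by exactly one while multiplying the leading coefficient by the degree. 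This is the classical Newton finite-difference route: it is purely algebraic (no differentiation or zero-multiplicity bookkeeping), fully self-contained, and proves the slightly stronger fact that $\Delta^n p$ vanishes identically, not merely at $0$, once $n>\deg(p)$. What the paper's route buys is the clean closed-form evaluation $\sum_{k=0}^n\binom{n}{k}k^n(-1)^{n-k}=n!$ as a generating-function identity (which it can cite to Gould). One wrinkle you both had to navigate: the theorem's formula $a_n=n!\,p^{(n)}$ matches the computed value $n!\,c_n$ only if $p^{(n)}$ is read as the leading coefficient $c_n$ rather than as the $n$-th derivative (which equals $n!\,c_n$); you made this reading explicit, and the paper's own proof, which concludes with $n!c_n$, adopts it implicitly.
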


\begin{proof}
It is well known that
\beq
\sum_{k=0}^n \binom{n}{k}x^{k}=(1+x)^n,
\eeq 
thus, 
\beq\label{eq:thing7}
\sum_{k=0}^n \binom{n}{k}k^m x^{k}=(xD)^m(1+x)^n,\ \text{where}\ D:=\frac{d}{dx}. 
\eeq
In equation \eqref{eq:thing7}, each differentiation reduces by one the multiplicity of the zero at $-1$. Thus we have,  
\beq\label{gould-calc}
\left.\sum_{k=0}^n \binom{n}{k}k^m (-1)^{n-k}= (-1)^n(xD)^m (1+x)^n\right|_{x=-1}=\left\{\begin{matrix}0 & 0\le m<n \\ n! & m=n \end{matrix}\right..
\eeq
Calculation \eqref{gould-calc} is also found in \cite[Equation 1.13, p. 2]{Gou72}. Thus, for polynomial, $p(x):=c_0+c_1x+\cdots+c_nx^n$, $\deg(p(x))\le n$, we have, 
\begin{align}
\sum_{k=0}^n \binom{n}{k}p(k) (-1)^{n-k} 
&= \sum_{k=0}^n \binom{n}{k}\left(c_0+c_1k+\cdots+c_nk^n\right) (-1)^{n-k} \\ 
&= \left(\sum_{k=0}^n \binom{n}{k} (-1)^{n-k}\right)c_0 +\cdots+ \left(\sum_{k=0}^n \binom{n}{k} k^n (-1)^{n-k}\right)c_n=n!c_n.
\end{align}
The result now follows. 
\end{proof}

\begin{thm}\label{thm:dia2}
If $T$ is a diagonal differential operator, 
\beq
T[B_n(x)]:=\left(\sum_{k=0}^\infty Q_k(x)D^k\right)B_n(x)=a_nB_n(x),\ \ \ \ \ n\in\N_0,
\eeq
then 
\beq\label{form1}
a_n=\sum_{k=0}^n \binom{n}{k} Q_k^{(k)}
\eeq
and 
\beq\label{form2}
Q_n^{(n)}=\sum_{k=0}^n \binom{n}{k} a_k (-1)^{n-k}. 
\eeq
\end{thm}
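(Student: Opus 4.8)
The plan is to reduce both identities to a single leading-coefficient computation, combined with the binomial inversion already recorded in Theorem~\ref{comb1}. Before either formula can even be parsed one must know that each $Q_k^{(k)}$ is a constant, i.e. that $\deg Q_k\le k$, so I would establish that first, then prove \eqref{form1} by comparing the coefficient of $x^n$ on the two sides of the eigenvalue equation, and finally obtain \eqref{form2} from \eqref{form1} by inversion.

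\emph{Step 1 (degree bound).} Since $T[B_n]=a_nB_n$, the recursion \eqref{equ13} of Theorem~\ref{piotr-thm1} becomes
\[
Q_n(x)=\frac{1}{B_n^{(n)}}\left(a_nB_n(x)-\sum_{k=0}^{n-1}Q_k(x)B_n^{(k)}(x)\right),
\]
where $B_n^{(n)}$ is a nonzero constant because $\deg B_n=n$. I would prove $\deg Q_k\le k$ by induction on $k$: the case $k=0$ gives $Q_0=a_0$, a constant, and if $\deg Q_j\le j$ for all $j<n$, then each product $Q_j(x)B_n^{(j)}(x)$ has degree at most $j+(n-j)=n$ while $a_nB_n(x)$ has degree at most $n$; dividing by the constant $B_n^{(n)}$ leaves $\deg Q_n\le n$. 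In particular $Q_k^{(k)}=k!\,q_k$ is a constant, where $q_k$ denotes the coefficient of $x^k$ in $Q_k(x)$.

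\emph{Step 2 (formula \eqref{form1}).} With the degree bound in hand I would extract the coefficient of $x^n$ from $T[B_n]=a_nB_n$. Writing $B_n(x)=b_nx^n+\cdots$ with $b_n\ne 0$, we have $T[B_n]=\sum_{k=0}^{n}Q_k(x)B_n^{(k)}(x)$ since $B_n^{(k)}\equiv 0$ for $k>n$, and $B_n^{(k)}(x)=b_n\frac{n!}{(n-k)!}x^{n-k}+\cdots$. Because $\deg Q_k\le k$, the only contribution to $x^n$ in $Q_k(x)B_n^{(k)}(x)$ comes from the top coefficient $q_k$, namely $q_kb_n\frac{n!}{(n-k)!}$. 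Summing over $k$ and matching with the coefficient $a_nb_n$ of $x^n$ in $a_nB_n(x)$, then cancelling $b_n$, gives
\[
a_n=\sum_{k=0}^n q_k\frac{n!}{(n-k)!}=\sum_{k=0}^n\binom{n}{k}\,k!\,q_k=\sum_{k=0}^n\binom{n}{k}Q_k^{(k)},
\]
which is \eqref{form1}.

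\emph{Step 3 (formula \eqref{form2}).} I would then read \eqref{form1} as saying that $\{a_n\}$ is the binomial transform of $\{Q_k^{(k)}\}$ and apply Theorem~\ref{comb1}, taking its sequence $\{a_n\}$ to be $\{Q_n^{(n)}\}$ and its transform $\{c_n\}$ to be the eigenvalues $\{a_n\}$; the inversion formula there immediately returns $Q_n^{(n)}=\sum_{k=0}^n\binom{n}{k}a_k(-1)^{n-k}$, which is \eqref{form2}. The only genuinely delicate point in this whole argument is Step~1: the symbol $Q_k^{(k)}$ is a constant only once $\deg Q_k\le k$ is known, and the clean single-term leading-coefficient extraction in Step~2 relies on exactly that bound, since otherwise lower-order coefficients of $B_n^{(k)}$ would also feed into the coefficient of $x^n$. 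Everything past Step~1 is bookkeeping plus the inversion already available as Theorem~\ref{comb1}.
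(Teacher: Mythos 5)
Your proposal is correct and takes essentially the same route as the paper: establish $\deg Q_k\le k$ by induction from the recursion in Theorem \ref{piotr-thm1}, compare top-degree data in the eigenvalue equation (the paper differentiates $n$ times and uses Leibniz's rule, which is the same computation as your extraction of the coefficient of $x^n$), and then obtain \eqref{form2} from \eqref{form1} by the binomial inversion of Theorem \ref{comb1}. The only difference is presentational—you spell out the induction for the degree bound that the paper leaves implicit.
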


\begin{proof}
Notice that for each $n\in\N_0$, $\deg\left(T[B_n(x)]\right)=\deg(a_n B_n(x))\le n$. Hence, the recursive formula of Theorem \ref{piotr-thm1}, establishes by induction that $\deg(Q_k(x))\le k$ for each $k\in\N_0$. Thus, by assumption, since 
\beq
\left(\sum_{k=0}^\infty Q_k(x)D^k\right)B_n(x)=a_n B_n(x),
\eeq
then differentiating both sides $n$ times yields, 
\beq
\sum_{k=0}^n \sum_{j=0}^n \binom{n}{j} Q_k^{(j)}(0)B_n^{(k+n-j)}(0) = a_n B_n^{(n)}. 
\eeq
Hence, since $B_n(x)$ is of degree $n$ and $\deg(Q_k(x))\le k$, we have,
\beq
\sum_{k=0}^n \binom{n}{k} Q_k^{(k)}=a_n.
\eeq
Equation \eqref{form2} follows from Theorem \ref{comb1}. 
\end{proof}

As we will find, Theorem \ref{thm:dia2} will lead to a number of interesting observations concerning diagonal differential operators. In many places, formula \eqref{form1} is typically assumed as a condition of study (for example \cite{Kra38}) or additional hypotheses are imposed to obtain \eqref{form1} (see \cite{KS66, Zhe14}). In the orthogonal case, this calculation is also found in \cite{SHT13, Sze59}; in particular, in \cite[Theorem 4.2.2, p. 61]{Sze59} somewhat more difficult abstract methods are used to derive \eqref{form1} for the Jacobi polynomials. Equation \eqref{form1} also appears to partially solve a problem of Fisk \cite[Question 115, p. 731]{Fis06}. We also note that Theorem \ref{thm:dia2} is a generalization of A. Piotrowski's observation \cite[Proposition 33, p. 35]{Pio07}. 

Our first interesting result provides necessary and sufficient conditions for the sequence of eigenvalues of a diagonal differential operator to be interpolated by a polynomial. 

\begin{thm}\label{thm:degqk}
Suppose $T$ is a diagonal differential operator, 
\beq\label{eq:thing15}
T[B_n(x)]:=\left(\sum_{k=0}^\infty Q_k(x)D^k\right)B_n(x)=a_nB_n(x),\ \ \ \ \ n\in\N_0.
\eeq
Then $\{a_n\}_{n=0}^\infty$ can be interpolated by a polynomial of degree $m$ if and only if $\deg(Q_m(x))=m$ and $\deg(Q_k(x))<k$ for $k>m$. Moreover, $\{a_n\}_{n=0}^\infty$ cannot be interpolated by a polynomial if and only if $\deg(Q_k(x))=k$ for infinitely many $k\in\N_0$. 
\end{thm}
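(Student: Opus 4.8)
The plan is to reduce the whole statement to the vanishing pattern of the constants $Q_k^{(k)}$. As established in the proof of Theorem~\ref{thm:dia2}, $\deg(Q_k(x))\le k$, so each $k$-th derivative $Q_k^{(k)}$ is a constant, and $\deg(Q_k(x))=k$ if and only if $Q_k^{(k)}\neq 0$, while $\deg(Q_k(x))<k$ if and only if $Q_k^{(k)}=0$. The two inversion formulas \eqref{form1} and \eqref{form2} are then exactly the bridge between this vanishing pattern and the eigenvalue sequence $\{a_n\}_{n=0}^\infty$, so I would prove the first equivalence by feeding each hypothesis through the appropriate formula.

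For the forward implication, suppose $a_n=p(n)$ for all $n$ with $\deg(p(x))=m$. Substituting into \eqref{form2} gives $Q_n^{(n)}=\sum_{k=0}^n\binom{n}{k}p(k)(-1)^{n-k}$, which is precisely the quantity analyzed in Theorem~\ref{comb2}. That theorem immediately yields $Q_n^{(n)}=0$ for $n>m$ and $Q_m^{(m)}=m!\,p^{(m)}\neq 0$, i.e. $\deg(Q_m(x))=m$ and $\deg(Q_k(x))<k$ for $k>m$.

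For the reverse implication, assume $Q_m^{(m)}\neq 0$ and $Q_k^{(k)}=0$ for every $k>m$, and define the candidate interpolant $p(x):=\sum_{k=0}^m\binom{x}{k}Q_k^{(k)}$, where $\binom{x}{k}=x(x-1)\cdots(x-k+1)/k!$ is the usual degree-$k$ polynomial. Evaluating via \eqref{form1}, for $n\ge m$ the terms with $k>m$ drop out by hypothesis and the remaining sum is $p(n)$; for $n<m$ the extra terms $\binom{n}{k}$ with $k>n$ vanish because $\binom{n}{k}=0$ for integers $0\le n<k$, so again the sum equals $p(n)$. Hence $p(n)=a_n$ for every $n\in\N_0$, and since only the $k=m$ term contributes to the top degree, with coefficient $Q_m^{(m)}/m!\neq 0$, we conclude $\deg(p(x))=m$.

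The ``moreover'' statement then follows by quantifying the first equivalence over $m$. Writing $S:=\{k\in\N_0:\deg(Q_k(x))=k\}$, the first part shows that if $S$ is finite and nonempty with maximum $m$ then $\{a_n\}$ is interpolated by a degree-$m$ polynomial, while if $S=\emptyset$ then \eqref{form1} gives $a_n\equiv 0$, interpolated by the zero polynomial; conversely, interpolability forces $S$ to be finite by the first part. Negating, $\{a_n\}$ cannot be interpolated by a polynomial if and only if $S$ is infinite, i.e. $\deg(Q_k(x))=k$ for infinitely many $k$. I expect the delicate point to be the reverse implication's verification that $p(n)=a_n$ holds even for the small indices $n<m$ (handled by the vanishing of $\binom{n}{k}$ there), together with the minor bookkeeping of the zero-polynomial edge case in the final equivalence; everything else is a direct application of Theorems~\ref{comb1}, \ref{comb2}, and \ref{thm:dia2}.
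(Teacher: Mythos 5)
Your proposal is correct and takes essentially the same approach as the paper: the forward direction by feeding the interpolating polynomial through \eqref{form2} and Theorem \ref{comb2}, and the converse by defining $p(x)=\sum_{k}\binom{x}{k}Q_k^{(k)}$ and checking it against \eqref{form1}. The only difference is that you explicitly work out the ``moreover'' equivalence and the zero-sequence edge case, which the paper's proof leaves implicit.
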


\begin{proof}
As was noted in the proof of Theorem \ref{thm:dia2}, $\deg(Q_k(x))\le k$ for each $k\in\N_0$. In addition, by Theorem \ref{thm:dia2}, 
\beq
Q_n^{(n)}=\sum_{k=0}^n \binom{n}{k} a_k (-1)^{n-k}. 
\eeq
Thus, if $\{a_n\}_{n=0}^\infty$ can be interpolated by a polynomial of degree $m$, $p(x)$, (i.e., $p(n)=a_n$ for all $n\in\N_0$, see Definition \ref{def:interp}), then by Theorem \ref{comb2}, $Q_m^{(m)}\not=0$ and $Q_k^{(k)}=0$ for $k>m$; that is, $\deg(Q_m(x))=m$ and $\deg(Q_k(x))<k$ for all $k>m$. Conversely, if there is $m\in\N_0$ such that $\deg(Q_m(x))=m$ and $\deg(Q_k(x))<k$ for all $k>m$ (that is, $Q_m^{(m)}\not=0$ and $Q_k^{(k)}=0$ for k$>m$), then, by Theorem \ref{thm:dia2}, $\{a_n\}_{n=0}^\infty$ can be interpolated by the polynomial of degree $m$, 
\beq
p(x):=\sum_{k=0}^\infty \binom{x}{k}Q_k^{(k)}, 
\eeq
where for each $k\in\N_0$, $\ds \binom{x}{k}=\frac{1}{k!}x(x-1)\cdots(x-(k-1))$ is a polynomial of degree $k$. 
\end{proof}

\begin{cor}\label{cor:intercon}
Suppose $T$ is a diagonal differential operator, 
\beq
T[B_n(x)]:=\left(\sum_{k=0}^\infty Q_k(x)D^k\right)B_n(x)=a_nB_n(x),\ \ \ \ \ n\in\N_0.
\eeq
Then each of the following statements hold. 
\begin{enumerate}
\item[\textup{(1)}] If $T$ is of finite order, then $\{a_n\}_{n=0}^\infty$ can be interpolated by a polynomial. 
\item[\textup{(2)}] If $\{a_n\}_{n=0}^\infty$ cannot be interpolated by a polynomial, then $T$ is of infinite order. 
\item[\textup{(3)}] If $\{a_n\}_{n=0}^\infty$ is a non-constant sequence with a bounded sub-sequence, then $T$ is of infinite order. 
\item[\textup{(4)}] If $\{a_n\}_{n=0}^\infty$ is either a non-constant, non-negative, decreasing sequence or a non-constant, non-positive, increasing sequence, then $T$ is of infinite order. 
\item[\textup{(5)}] If $\{a_n\}_{n=0}^\infty$ is a non-constant alternating sequence, then $T$ is of infinite order.  
\end{enumerate}
\end{cor}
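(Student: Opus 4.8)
The plan is to treat the five parts in dependency order, deriving (1) directly from Theorem~\ref{thm:degqk}, obtaining (2) as its contrapositive, and then reducing (3)--(5) to (2). The engine throughout is the equivalence in Theorem~\ref{thm:degqk}: the sequence $\{a_n\}_{n=0}^\infty$ cannot be interpolated by a polynomial if and only if $\deg(Q_k(x))=k$ for infinitely many $k\in\N_0$.

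For (1), I would suppose $T$ has finite order $N$, so that $Q_k(x)\equiv 0$ for every $k>N$; then $\deg(Q_k(x))=k$ can hold for at most the finitely many indices $k\le N$. By the moreover clause of Theorem~\ref{thm:degqk}, the negation of ``infinitely many'' gives that $\{a_n\}_{n=0}^\infty$ is interpolated by a polynomial. Statement (2) is then immediate as the contrapositive: a non-interpolable eigenvalue sequence forces $T$ to be of infinite order, since otherwise (1) would supply an interpolating polynomial.

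For (3)--(5), the unifying observation is that, by (2), it suffices to prove in each case that $\{a_n\}_{n=0}^\infty$ admits no polynomial interpolant, and I would argue each by contradiction. Assume $a_n=p(n)$ for all $n\in\N_0$. In every case the non-constancy hypothesis forces $p$ to be a non-constant polynomial (a constant $p$ would make $\{a_n\}_{n=0}^\infty$ constant), and I would exploit two elementary facts about such $p$ evaluated on the non-negative integers: $|p(n)|\to\infty$ as $n\to\infty$, and $p(n)$ has a single constant sign for all sufficiently large $n$ (its finitely many real zeros are eventually exhausted and the leading term dominates). For (3), a bounded subsequence $\{a_{n_j}\}$ has strictly increasing indices and hence $n_j\to\infty$, so $|p(n_j)|\to\infty$ contradicts boundedness. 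For (4), a non-negative decreasing sequence lies in $[0,a_0]$ and a non-positive increasing sequence lies in $[a_0,0]$, so each is bounded and non-constant; thus (4) falls under the hypotheses of (3) (or directly: $|p(n)|\to\infty$ precludes a bounded monotone sequence). For (5), the eventual constancy of the sign of $p(n)$ is incompatible with the alternating signs of $\{a_n\}_{n=0}^\infty$.

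I do not anticipate a genuine obstacle here: once Theorem~\ref{thm:degqk} is in hand, the corollary is a sequence of short deductions. The only points meriting care are bookkeeping ones---confirming that the indices of any subsequence tend to infinity (automatic, since they are strictly increasing) and pinning down the working definition of an ``alternating'' sequence so that the eventual-sign argument applies cleanly.
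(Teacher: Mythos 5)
Your proposal is correct, and it follows exactly the route the paper intends: the paper states this corollary without proof as an immediate consequence of Theorem~\ref{thm:degqk}, with (1)--(2) coming from the ``infinitely many $k$'' equivalence and (3)--(5) from the elementary facts that a non-constant polynomial satisfies $|p(n)|\to\infty$ and has eventually constant sign on $\N_0$. Your bookkeeping (strictly increasing subsequence indices, reduction of (4) to (3), sign-alternation versus eventual sign constancy) fills in precisely the details the paper leaves to the reader.
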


\begin{exmp}\label{ex:classicinfinite}
Consider the diagonal differential operator, 
\beq
T[x^n]=\frac{1}{n!}x^n. 
\eeq
Operator $T$ has the following differential representation (see Theorem \ref{piotr-thm1}), 
\begin{align}
T=&\sum_{n=0}^\infty\frac{1}{n!}\left(\sum_{k=0}^n \binom{n}{k}\frac{1}{k!}(-1)^{n-k}\right)x^nD^n=1-\frac{1}{2}x^2D^2+\frac{2}{3}x^3D^3-\frac{5}{8}x^4D^4+\cdots. 
\end{align}
Corollary \ref{cor:intercon} item (3) implies that $T$ is an infinite order differential operator. 
\end{exmp}

\begin{exmp}\label{ex:fininfherm}
Consider the following Hermite diagonal differential operators, 
\beq
T[H_n(x)]=nH_n(x)\ \ \ \text{and}\ \ \ \tilde{T}[H_n(x)]=(-1)^n nH_n(x), 
\eeq
where $H_n(x)$ denotes the $n^{\text{th}}$ Hermite polynomial (see \cite[p. 187]{Rai60}). We provide the first few polynomial coefficients from Theorem \ref{piotr-thm1}, 
\beq
T=xD-\frac{1}{2}D^2, 
\eeq
and
\beq
\tilde{T}=-xD+\left(2x^2-\frac{1}{2}\right)D^2+\left(-2x^3+x\right)D^3+\cdots. 
\eeq
We see that $T$ is a finite order differential operator while Corollary \ref{cor:intercon} item (5) shows that $\tilde{T}$ is an infinite order differential operator. 
\end{exmp}

\begin{exmp}\label{ex:leginfinite}
While the notions of ``polynomial interpolation'' and ``finite order'' seem intimately connected (Theorem \ref{thm:degqk}), there exist operators that are of infinite order and have eigenvalues that can be interpolated by a polynomial. Consider the following Legendre diagonal differential operator, 
\beq
T[P_n(x)]=nP_n(x),  
\eeq
where $P_n(x)$ denotes the $n^{\text{th}}$ Legendre polynomial (see \cite[p. 157]{Rai60}). Using the Legendre differential equation, we have, 
\beq
(T^2+T)[P_n(x)]=(n^2+n)P_n(x)=((x^2-1)D^2+2xD)P_n(x).  
\eeq
If $T$ is of finite order $\left(\ds T:=\sum_{k=0}^n Q_k(x)D^k\right)$, then $T^2+T$ is also of finite order ($T^2+T=(Q_n(x))^2 D^{2n}+\cdots$). Hence, we conclude that $T$ must be of order one, since $T^2+T$ is of order two; i.e., $T=Q_1(x)D+Q_0(x)$, where $Q_1(x)$ and $Q_0(x)$ are polynomials. Moreover, the polynomial coefficients of $D^2$ are equal and therefore $Q_1(x)^2=x^2-1$. But this is impossible for a polynomial. Hence, $T$ cannot be of finite order. Thus, $T$ is an infinite order diagonal differential operator with eigenvalues that can be interpolated by a polynomial. For clarity, by Theorem \ref{piotr-thm1}, we provide the first few $Q_k$'s of operator $T$, 
\beq
T=(x)D+\left(-\frac{1}{3}\right)D^2+\left(\frac{2}{15}x\right)D^3+\left(-\frac{4}{105}x^2-\frac{1}{105}\right)D^4+\cdots
\eeq
Note that although $T$ is of infinite order, Theorem \ref{thm:degqk} still holds, in that $\deg(Q_k(x))<k$ for $k>2$. 
\end{exmp}

We now generalize when polynomial interpolation will correspond to a finite order differential operator. For example, consider the Hermite or Laguerre basis of L. Miranian \cite{Mir05}. 

\begin{thm}\label{finiffint}
Suppose for the basis, $\{B_n(x)\}_{n=0}^\infty$, there is a finite order differential operator, $W$, such that $W[B_n(x)]=nB_n(x)$. Now suppose $T$ is any diagonal differential operator such that,
\beq
T[B_n(x)]:=\left(\sum_{k=0}^\infty Q_k(x)D^k\right)B_n(x)=a_nB_n(x),\ \ \ \ \ n\in\N_0. 
\eeq
Then $\{a_n\}_{n=0}^\infty$ can be interpolated by a polynomial if and only if $T$ is a finite order differential operator. 
\end{thm}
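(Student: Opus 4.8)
The plan is to handle the two implications separately, with the forward direction being essentially immediate and the reverse direction resting on a functional-calculus argument built around $W$. For the implication ``$T$ of finite order $\Rightarrow\{a_n\}_{n=0}^\infty$ interpolated by a polynomial'', I would simply invoke Corollary \ref{cor:intercon}(1), which already yields this conclusion; notably the hypothesis on $W$ plays no role in this direction.

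For the reverse implication, suppose $\{a_n\}_{n=0}^\infty$ is interpolated by a polynomial $p(x)=\sum_{j=0}^m c_j x^j$, so that $a_n=p(n)$ for every $n\in\N_0$. The key observation is that $W$ lets us realize $T$ as a polynomial in $W$. Since $W[B_n(x)]=nB_n(x)$, a straightforward induction gives $W^j[B_n(x)]=n^jB_n(x)$, and hence the operator $p(W):=\sum_{j=0}^m c_j W^j$ satisfies
\beq
p(W)[B_n(x)]=p(n)B_n(x)=a_nB_n(x)=T[B_n(x)],\qquad n\in\N_0.
\eeq
Because $\{B_n(x)\}_{n=0}^\infty$ is a basis for $\R[x]$ (its members have degrees $0,1,2,\dots$), linearity forces $T=p(W)$ as operators on $\R[x]$.

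It then remains to verify that $p(W)$ is a finite order differential operator and to conclude via the uniqueness part of Theorem \ref{piotr-thm1}. Writing the finite order operator as $W=\sum_{k=0}^N R_k(x)D^k$, I would note that a composition of two finite order differential operators is again of finite order: the Leibniz identity $D^i(R(x)f)=\sum_{l=0}^i \binom{i}{l}R^{(l)}(x)D^{i-l}f$ shows that the product of operators of orders $r$ and $s$ has order at most $r+s$. Consequently each $W^j$ has order at most $jN$, and the finite sum $p(W)$ has order at most $mN$. Thus $T=p(W)$ possesses a finite order differential representation, and by the uniqueness of the representation in Theorem \ref{piotr-thm1} this must coincide with the representation $\sum_{k=0}^\infty Q_k(x)D^k$ of $T$; that is, $T$ is of finite order.

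I expect no deep obstacle, since the argument is short once the functional-calculus idea is in place. The two points requiring care are, first, recognizing that the hypothesis on $W$ is precisely what supplies a finite order operator into which the interpolating polynomial $p$ can be substituted, and second, the bookkeeping (the Leibniz step) showing that composition preserves finite order, which is what upgrades the abstract identity $T=p(W)$ to the conclusion that $T$ has a finite order differential representation through Theorem \ref{piotr-thm1}.
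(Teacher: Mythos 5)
Your proposal is correct and follows essentially the same route as the paper: Corollary \ref{cor:intercon}(1) for the forward direction, and for the converse the functional-calculus step $p(W)[B_n(x)]=p(n)B_n(x)=a_nB_n(x)$ combined with the uniqueness of the representation in Theorem \ref{piotr-thm1} to conclude $T=p(W)$ is of finite order. The only difference is that you explicitly verify, via the Leibniz rule, that compositions and sums of finite order operators remain of finite order, a point the paper leaves implicit.
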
 

\begin{proof}
If $T$ is a finite order diagonal differential operator, then, by Corollary \ref{cor:intercon} item (1), $\{a_n\}_{n=0}^\infty$ can be interpolated by a polynomial. Conversely, suppose that $\{a_n\}_{n=0}^\infty$ can be interpolated by a polynomial, $p(x)$. Using the operator, $W$, from our assumption, we observe that $p(W)B_n(x)=p(n)B_n(x)=a_n B_n(x)$. Hence, by uniqueness in Theorem \ref{piotr-thm1}, $T=p(W)$. Thus, $T$ is a finite order diagonal differential operator. 
\end{proof}

\begin{ques}
In relation to the results of L. Miranian \cite{Mir05} and Theorem \ref{finiffint}, we might wonder if a more general statement is possible. Suppose $\{B_n(x)\}_{n=0}^\infty$ is a simple basis of polynomials, and $W$ is a finite order differential operator of ``smallest order'' that diagonalizes $\{B_n(x)\}_{n=0}^\infty$, as in,
\beq
W[B_n(x)]=a_nB_n(x).  
\eeq 
By ``smallest order'' we mean that if $U$ is any other operator that diagonalizes $\{B_n(x)\}_{n=0}^\infty$, then the order of $W$ is smaller than the order of $U$. This leads to the following question. If $T$ is any other finite order differential operator that diagonalizes $\{B_n(x)\}_{n=0}^\infty$, 
\beq
T[B_n(x)]=c_nB_n(x),
\eeq
then must there exist a polynomial, $p(x)$, such that, 
\beq
p(W)B_n(x)=T[B_n(x)]=c_nB_n(x)=p(a_n)B_n(x),
\eeq
for every $n\in\N_0$? 
\end{ques}

We now begin work on the uniqueness of eigenvalues and eigenvectors in a diagonal differential operators. 

\begin{thm}\label{unique1}
Suppose $T$ is a diagonal differential operator with respect to $\{a_n\}_{n=0}^\infty$ and $\{B_n(x)\}_{n=0}^\infty$ and with respect to $\{\tilde{a}_n\}$ and $\{\tilde{B}_n(x)\}_{n=0}^\infty$; that is, 
\beq
T[B_n(x)]=a_nB_n(x)\ \ \ \text{and}\ \ \ T[\tilde{B}_n(x)]=\tilde{a}_n\tilde{B}_n(x). 
\eeq 
Then $a_n=\tilde{a}_n$ for all $n\in\N_0$. 
\end{thm}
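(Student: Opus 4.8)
The plan is to exploit the fact that Theorem \ref{thm:dia2} expresses each eigenvalue purely in terms of the coefficient polynomials $\{Q_k(x)\}_{k=0}^\infty$ of $T$, with no reference to the particular eigenvector basis used. The crucial observation is that by Theorem \ref{piotr-thm1} the operator $T$ determines its representation $T=\sum_{k=0}^\infty Q_k(x)D^k$ uniquely: the sequence $\{Q_k(x)\}_{k=0}^\infty$ is an intrinsic attribute of $T$ and does not depend on which simple basis one feeds into the recursive formula \eqref{equ13}. In particular, each constant $Q_k^{(k)}$ is fixed once $T$ is fixed.

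First I would record that $T$, being a linear operator $\R[x]\to\R[x]$, has by Theorem \ref{piotr-thm1} a unique representation $\sum_{k=0}^\infty Q_k(x)D^k$. Next I would apply Theorem \ref{thm:dia2} to the first diagonalization, $T[B_n(x)]=a_nB_n(x)$: since $\{B_n(x)\}_{n=0}^\infty$ is a simple basis diagonalizing $T$, formula \eqref{form1} yields
\[
a_n=\sum_{k=0}^n \binom{n}{k} Q_k^{(k)},\qquad n\in\N_0.
\]
Then I would apply the very same theorem to the second diagonalization, $T[\tilde{B}_n(x)]=\tilde{a}_n\tilde{B}_n(x)$; because the coefficient sequence $\{Q_k(x)\}_{k=0}^\infty$ is the same (it is determined by $T$ alone, not by the eigenvectors), this gives
\[
\tilde{a}_n=\sum_{k=0}^n \binom{n}{k} Q_k^{(k)},\qquad n\in\N_0.
\]

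Finally, comparing the two displays, the right-hand sides are identical term by term, so $a_n=\tilde{a}_n$ for every $n\in\N_0$, which is the claim. I do not expect any genuine obstacle here: the entire content is the basis-independence of formula \eqref{form1}, which in turn rests on the uniqueness asserted in Theorem \ref{piotr-thm1}. The only point requiring care is to state explicitly that $Q_k^{(k)}$ is a constant depending on $T$ but not on the eigenvector sequence, so that the two invocations of Theorem \ref{thm:dia2} genuinely produce the same quantities.
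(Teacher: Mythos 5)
Your proposal is correct and is exactly the paper's argument: the paper's proof reads, in its entirety, ``Use Theorem \ref{thm:dia2} after noting that formula \eqref{form1} is independent of basis,'' and your write-up simply makes explicit the basis-independence of $\{Q_k(x)\}_{k=0}^\infty$ via the uniqueness in Theorem \ref{piotr-thm1}, which is the intended justification.
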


\begin{proof}
Use Theorem \ref{thm:dia2} after noting that formula \eqref{form1} is independent of basis. 
\end{proof}

Upon stating the above theorem, we immediately ask if the $B_n$'s in a diagonal differential operator are also unique. Simple examples demonstrate that this is not the case. In particular, for any sequence $\{\beta_n\}_{n=0}^\infty\subseteq\R$ such that $\tilde{B}_n(x):=\beta_n B_n(x)$, where $T[B_n(x)]=a_nB_n(x)$, then certainly, by linearity, $T[\tilde{B}_n(x)]=a_n\tilde{B}_n(x)$; that is, any diagonal differential operator, $T$, will diagonalize on multiples of it's diagonalization. However, under additional restrictions on the eigenvalues, then we can show that the basis chosen for diagonalization is unique up to the constant multiplies noted above. 

\begin{thm}[{\cite[H. Krall and I. Scheffer]{KS66}}]\label{unique}
Suppose $T$ is a diagonal differential operator with respect to $\{a_n\}_{n=0}^\infty$ and $\{B_n(x)\}_{n=0}^\infty$ and with respect to $\{a_n\}_{n=0}^\infty$ and $\{\tilde{B}_n\}_{n=0}^\infty$,
\beq
T[B_n(x)]=a_nB_n(x)\ \ \ \text{and}\ \ \ T[\tilde{B}_n(x)]=a_n\tilde{B}_n(x),\ \ \ \ \ n\in\N_0.
\eeq
For a fixed $m$, suppose $a_m\not=a_k$ for all $0\le k < m$. Then there is $\beta\in\R$, $\beta\not=0$, such that 
\beq
B_m(x)=\beta \tilde{B}_m(x). 
\eeq
\end{thm}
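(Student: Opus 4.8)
The plan is to exploit the fact that both $\{B_n(x)\}_{n=0}^\infty$ and $\{\tilde{B}_n(x)\}_{n=0}^\infty$ are bases of $\R[x]$, since each consists of exactly one polynomial of every degree. First I would expand the $m$th element of one basis in terms of the other: because $\deg(\tilde{B}_m(x))=m$, there are unique scalars $c_0,\dots,c_m\in\R$ with $c_m\neq 0$ such that
\beq
\tilde{B}_m(x)=\sum_{k=0}^m c_k B_k(x).
\eeq
The nonvanishing of $c_m$ is exactly what records that $\tilde{B}_m$ genuinely has degree $m$.

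Next I would apply $T$ to this identity in two ways. Using $T[\tilde{B}_m(x)]=a_m\tilde{B}_m(x)$ on the left, and linearity together with $T[B_k(x)]=a_kB_k(x)$ on the right, I obtain
\beq
a_m\sum_{k=0}^m c_k B_k(x)=\sum_{k=0}^m c_k a_k B_k(x),
\eeq
which rearranges to $\sum_{k=0}^m c_k(a_m-a_k)B_k(x)=0$.

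Then I would invoke the linear independence of $B_0(x),\dots,B_m(x)$ (again a consequence of the distinct-degrees condition) to conclude that $c_k(a_m-a_k)=0$ for each $0\le k\le m$. Here the hypothesis enters decisively: for $0\le k<m$ we have $a_m\neq a_k$, forcing $c_k=0$, while the term $k=m$ is unconstrained and retains $c_m\neq 0$. Hence $\tilde{B}_m(x)=c_m B_m(x)$, and setting $\beta=1/c_m$ yields $B_m(x)=\beta\tilde{B}_m(x)$ with $\beta\neq 0$, as desired.

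I do not anticipate a serious obstacle: the argument is essentially the classical observation that eigenvectors attached to distinct eigenvalues are independent, specialized to the filtered setting where degree plays the role of the ordering on eigenvalues. The only point demanding care is justifying that the top coefficient $c_m$ is nonzero and survives the argument, i.e., that the highest-degree term cannot be accidentally lost; this is guaranteed precisely because both families are simple bases (one polynomial of each degree), so matching degrees forces $c_m\neq 0$ from the outset.
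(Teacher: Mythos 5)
Your proposal is correct and follows essentially the same argument as the paper: expand the $m$th element of one simple basis in the other, apply $T$ in two ways, equate coefficients, and use $a_m\neq a_k$ for $k<m$ to kill the lower-order terms. The only cosmetic difference is that you expand $\tilde{B}_m$ in terms of the $B_k$'s (so your $\beta$ is $1/c_m$), whereas the paper expands $B_m$ in terms of the $\tilde{B}_k$'s; the two are symmetric and equally valid.
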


\begin{proof}
Since $\{\tilde{B}_n(x)\}_{n=0}^\infty$ is a simple basis, $B_m(x)=\beta_m \tilde{B}_m(x)+\beta_{m-1} \tilde{B}_{m-1}(x)+\cdots+\beta_0 \tilde{B}_0(x)$, $\beta_m\not=0$. We now apply $T$ to $B_m(x)$ and calculate in two different ways,
\beq
T[B_m(x)]= a_mB_m(x) = a_m\beta_m \tilde{B}_m(x)+a_m\beta_{m-1} \tilde{B}_{m-1}(x)+\cdots+a_m\beta_0 \tilde{B}_0(x),
\eeq
and
\beq
T[B_m(x)]=T[\beta_m\tilde{B}_m(x)+\cdots+\beta_0\tilde{B}_0(x)]=a_m\beta_m\tilde{B}_m(x)+a_{m-1}\beta_{m-1}\tilde{B}_{m-1}(x)+\cdots+a_0\beta_0\tilde{B}_0(x).
\eeq
Equating coefficients from above, yields, $a_{m}\beta_{m-1}=a_{m-1}\beta_{m-1}$, $a_{m}\beta_{m-2}=a_{m-2}\beta_{m-2}$, $\ldots$, and $a_{m}\beta_{0}=a_{0}\beta_{0}$. By assumption, $a_m\not= a_k$ for $0\le k<m$, thus $\beta_k=0$ for $0\le k < m$. Hence, we have $B_m(x)=\beta_m \tilde{B}_m(x)$ ($\beta_m\not=0$) as desired.  
\end{proof}

\begin{exmp}\label{ex:notsame}
In some sense, Theorem \ref{unique} is best possible. Consider the following diagonal differential operators, 
\beq
T[x^n]:=\lr{\sum_{k=0}^\infty Q_k(x) D^k}x^n=(-1)^nx^n, 
\eeq
and
\beq
\tilde{T}[H_n(x)]:=\lr{\sum_{k=0}^\infty \tilde{Q}_k(x) D^k}H_n(x)=(-1)^nH_n(x). 
\eeq
Using the recursive formula of Theorem \ref{piotr-thm1}, by induction, for every $k\in\N_0$, 
\beq
Q_k(x)=\tilde{Q}_k(x)=\frac{(-2)^k}{k!}x^k. 
\eeq
Hence, by linearity, $T=\tilde{T}$. Thus, $T$ is a diagonal differential hyperbolicity preserving operator that can diagonalize with two distinct non-trivial bases (the bases are not simply multiples of each other) (cf. Theorem \ref{unique}). 
\end{exmp}

\hspace{.5in}

\section{Hyperbolicity Preserving Diagonal Differential Operators}

We will now refine the results of Section \ref{section1}, specifically in the case of hyperbolicity preserving operators. 

\begin{defn}\label{defhp}
Let $T:\R[x]\to\R[x]$ be a linear operator on real polynomials. We say that $T$ is \textit{hyperbolicity preserving} if $T[p(x)]$ has only real zeros whenever $p(x)$ has only real zeros. Furthermore, if $T$ is a hyperbolicity preserving diagonal differential operator, 
\beq
T[B_n(x)]:=\left(\sum_{k=0}^\infty Q_k(x)D^k\right)B_n(x)=a_nB_n(x),\ \ \ \ \ n\in\N_0, 
\eeq
then the eigenvalue sequence, $\{a_n\}_{n=0}^\infty$, is referred to as a \textit{multiplier sequence} for the basis, $\{B_n(x)\}_{n=0}^\infty$, or simply a $\{B_n(x)\}_{n=0}^\infty$ \textit{multiplier sequence}. If for each $n\in\N_0$, $B_n(x)=x^n$, then $\{a_n\}_{n=0}^\infty$ is called a \textit{classical multiplier sequence}. 
\end{defn}

Multiplier sequences are well studied throughout the literature. We summarize below some of the important properties that will be of use in the sequel (see also \cite{CC89}, \cite[p. 341]{Lev80}, \cite[Proposition 45, p. 43]{Pio07}). 

\begin{thm}\label{thm:mslist}
Let $\{\gamma_k\}_{k=0}^\infty$ be a classical multiplier sequence. Then each of the following assertions hold: 
\begin{enumerate}
\item[\textup{(1)}] $\{|\gamma_k|\}_{k=0}^\infty$, $\{-|\gamma_k|\}_{k=0}^\infty$, $\{(-1)^k|\gamma_k|\}_{k=0}^\infty$, and $\{(-1)^{k+1}|\gamma_k|\}_{k=0}^\infty$ are also classical multiplier sequences, furthermore one of these is the original sequence, $\{\gamma_k\}_{k=0}^\infty$, and
\item[\textup{(2)}] the Tur\'an inequalities are satisfied; i.e., $\gamma_{k+1}^2-\gamma_{k}\gamma_{k+2}\ge 0$ for every $k\ge 0$. 
\end{enumerate}
\end{thm}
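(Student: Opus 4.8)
The plan is to treat the two assertions separately: I would prove the Tur\'an inequalities (2) by a self-contained index-shift argument, and deduce the sign-pattern statement (1) from the P\'olya--Schur characterization of multiplier sequences together with some elementary operator bookkeeping.

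For part (2), the key observation is that the classical multiplier sequence property is preserved under a single index shift. Given a real-rooted polynomial $p(x)=\sum_k a_k x^k$, the polynomial $xp(x)$ is again real-rooted, so applying the operator $T$ with $T[x^n]=\gamma_n x^n$ gives $T[xp(x)]=\sum_k a_k\gamma_{k+1}x^{k+1}=x\sum_k a_k\gamma_{k+1}x^k$, which is real-rooted by hypothesis; dividing out the factor $x$ shows $\sum_k a_k\gamma_{k+1}x^k$ is real-rooted. Hence $\{\gamma_{k+1}\}_{k=0}^\infty$ is again a classical multiplier sequence, and by induction so is every shifted sequence $\{\gamma_{k+j}\}_{k=0}^\infty$. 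I would then apply each shifted sequence to $(1+x)^2$: the resulting quadratic $\gamma_j+2\gamma_{j+1}x+\gamma_{j+2}x^2$ must have only real zeros, and the discriminant condition is exactly $\gamma_{j+1}^2-\gamma_j\gamma_{j+2}\ge 0$ (the degenerate case $\gamma_{j+2}=0$ reduces to the trivial $\gamma_{j+1}^2\ge 0$). This yields (2) for every $j$.

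For part (1), I would first record that hyperbolicity preservation is stable under three elementary operations: multiplication by a nonzero real scalar, the reflection $R\colon p(x)\mapsto p(-x)$, and composition of operators; each sends real-rooted polynomials to real-rooted polynomials. Since $T[x^n]=\gamma_n x^n$, the operator $-T$ has eigenvalues $-\gamma_n$, and the composition $T\circ R$ satisfies $(T\circ R)[x^n]=(-1)^n\gamma_n x^n$; both are hyperbolicity preserving linear operators having every monomial $x^n$ as an eigenvector, hence (Theorem \ref{piotr-thm1}) diagonal differential operators, so $\{-\gamma_n\}$ and $\{(-1)^n\gamma_n\}$ are classical multiplier sequences. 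The remaining, genuinely nontrivial ingredient is that the signs of a multiplier sequence are rigid: by the P\'olya--Schur theorem, $\sum_k \gamma_k x^k/k!$ (or its reflection) admits a product representation forcing the nonzero $\gamma_k$ to be either all of one sign or to alternate strictly, i.e.\ there is a single choice $s_k\in\{1,-1,(-1)^k,(-1)^{k+1}\}$ with $\gamma_k=s_k|\gamma_k|$ for all $k$. With this dichotomy in hand, each of $\{|\gamma_k|\}$, $\{-|\gamma_k|\}$, $\{(-1)^k|\gamma_k|\}$, $\{(-1)^{k+1}|\gamma_k|\}$ is obtained from $\{\gamma_k\}$ by applying an overall sign $\pm 1$ and, when $T\circ R$ is used, an extra factor $(-1)^k$, so all four are classical multiplier sequences; and the defining pattern $s_k$ exhibits the original sequence as one of the four.

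The part I expect to be the main obstacle is establishing this sign dichotomy: it is not a consequence of the Tur\'an inequalities alone (for instance, the coefficient sequence of $e^{-x^2}$ satisfies every Tur\'an inequality yet is not a multiplier sequence, as one sees by applying it to $(1+x)^8$), and it genuinely requires the P\'olya--Schur structure theorem, which I would invoke as a cited external result rather than reprove. Once the dichotomy is granted, the operator-composition arguments are routine and the bookkeeping of the four cases is straightforward.
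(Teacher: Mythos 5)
Your proof is correct, but note that the paper itself gives no proof of Theorem \ref{thm:mslist}: it is presented as a summary of well-known properties of classical multiplier sequences, justified only by citations to \cite{CC89}, \cite[p.~341]{Lev80}, and \cite[Proposition 45, p.~43]{Pio07}. Your reconstruction is essentially the standard argument from that literature. For part (2), the index-shift observation (factoring $x$ out of $T[xp(x)]$ to conclude that $\{\gamma_{k+j}\}_{k=0}^\infty$ is again a classical multiplier sequence for every $j$) followed by applying each shifted sequence to $(1+x)^2$ is exactly how the Tur\'an inequalities are usually obtained, and your treatment of the degenerate case $\gamma_{j+2}=0$ is correct. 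For part (1), you correctly isolate the one genuinely nontrivial ingredient: the sign rigidity of a multiplier sequence does not follow from the Tur\'an inequalities alone (your $e^{-x^2}$ remark is the right kind of cautionary example) and requires the P\'olya--Schur characterization, which forces $\gamma_k=s_k|\gamma_k|$ for a single pattern $s_k\in\{1,-1,(-1)^k,(-1)^{k+1}\}$. Invoking P\'olya--Schur as an external result is legitimate and not circular, since it is logically prior to the statement being proved; the remaining bookkeeping with $-T$ and $T\circ R$ (both hyperbolicity preserving, both diagonal on the monomials, hence giving classical multiplier sequences under Definition \ref{defhp}) is routine and you carry it out correctly. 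The net comparison: the paper outsources the entire theorem to the literature, whereas your route outsources only P\'olya--Schur and derives everything else from the definitions, which makes the statement nearly self-contained at the cost of a page of standard argument.
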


\begin{rmk}\label{remark}
The Tur\'an inequalities are also satisfied for the coefficients of polynomials with only real zeros (see \cite[Corollary 4.11, p. 108]{Fis06}). In particular, we note that if a polynomial has only real negative zeros then none of the polynomial coefficients are zero. Likewise, if a transcendental entire function can be approximated by polynomials with only real negative zeros and has no zeros at the origin, then none of the Taylor coefficients are zero. 
\end{rmk}

Another important result, in the study of multiplier sequences and hyperbolicity preserving operators, is the multiplier sequence categorization theorem of A. Piotrowski. 

\begin{thm}[{\cite[Theorem 158, p. 145]{Pio07}}]\label{bnmstocms}
If $\{a_n\}_{n=0}^\infty\subseteq\R$ is a $\{B_n(x)\}_{n=0}^\infty$ multiplier sequence \textup{(}$\deg(B_n(x))=n$, $B_0(x)\not\equiv 0$\textup{)}, then $\{a_n\}_{n=0}^\infty$ is also a classical multiplier sequence. 
\end{thm}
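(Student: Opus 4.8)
The plan is to prove the equivalent statement that the \emph{classical} diagonal operator $\Gamma$, defined by $\Gamma[x^n]=a_nx^n$, is hyperbolicity preserving; by Definition \ref{defhp} this is exactly the assertion that $\{a_n\}_{n=0}^\infty$ is a classical multiplier sequence. The main idea is to realize $\Gamma$ as a limit of conjugates of the given operator $T$ by positive dilations, and then to transfer hyperbolicity preservation through that limit. For $a>0$ let $\sigma_a$ denote the dilation $(\sigma_a p)(x):=p(ax)$; since $a>0$, both $\sigma_a$ and $\sigma_{1/a}$ send polynomials with only real zeros to polynomials with only real zeros. Consequently each conjugate $T_a:=\sigma_a\,T\,\sigma_{1/a}$ is hyperbolicity preserving, being a composition of hyperbolicity preserving maps.

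First I would compute the dilation limit of the operators $T_a$. Writing $Q_k(x)=\sum_{j=0}^k c_{k,j}x^j$ (recall from the proof of Theorem \ref{thm:dia2} that $\deg(Q_k(x))\le k$, so $c_{k,k}=Q_k^{(k)}/k!$), a direct computation gives $a^{-k}Q_k(ax)=\sum_{j=0}^k c_{k,j}a^{\,j-k}x^j\to c_{k,k}x^k$ as $a\to\infty$, since every exponent $j-k$ with $j<k$ is strictly negative. Applying this coefficientwise to $T_a[x^m]=\sum_{k=0}^m \frac{m!}{(m-k)!}\,a^{-k}Q_k(ax)\,x^{m-k}$ and collecting terms, the limiting operator $T_\infty:=\lim_{a\to\infty}T_a$ satisfies
\[
T_\infty[x^m]=\left(\sum_{k=0}^m \frac{m!}{(m-k)!}\,\frac{Q_k^{(k)}}{k!}\right)x^m=\left(\sum_{k=0}^m \binom{m}{k}Q_k^{(k)}\right)x^m=a_mx^m,
\]
where the last equality is formula \eqref{form1} of Theorem \ref{thm:dia2}. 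Hence $T_\infty=\Gamma$, and for every fixed $p\in\R[x]$ the polynomials $T_a[p]$ have degree at most $\deg(p)$ and converge to $\Gamma[p]$ coefficientwise as $a\to\infty$.

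Finally I would transfer hyperbolicity across the limit. Fix a polynomial $p$ with only real zeros. Each $T_a[p]$ has only real zeros, the degrees are uniformly bounded by $\deg(p)$, and $T_a[p]\to\Gamma[p]$ coefficientwise, hence locally uniformly. By Hurwitz's theorem—equivalently, the closedness under coefficientwise limits of the family of real-rooted polynomials of bounded degree together with the zero polynomial—the limit $\Gamma[p]$ is either identically zero or has only real zeros; in either case it has only real zeros. Therefore $\Gamma$ is hyperbolicity preserving and $\{a_n\}_{n=0}^\infty$ is a classical multiplier sequence. The delicate point, and the place I expect to be the main obstacle, is precisely this last step: one must invoke Hurwitz's theorem and account for the possibility that the leading coefficient degenerates in the limit (so that $\Gamma[p]$ may have strictly smaller degree than the $T_a[p]$, or even vanish), a phenomenon that is harmless here but is exactly where a careless argument would fail.
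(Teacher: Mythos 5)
The first thing to note is that the paper does not prove this theorem at all: it is quoted from A.~Piotrowski's thesis \cite[Theorem 158, p.~145]{Pio07}, so there is no internal proof to compare your argument against. Judged on its own merits, your proof is correct. The conjugates $T_a=\sigma_a T\sigma_{1/a}=\sum_{k=0}^\infty a^{-k}Q_k(ax)D^k$ are hyperbolicity preserving for every $a>0$; the bound $\deg(Q_k(x))\le k$ (which, as you note, holds for any diagonal differential operator via the recursion \eqref{equ13}) gives $a^{-k}Q_k(ax)\to \frac{Q_k^{(k)}}{k!}x^k$; formula \eqref{form1} identifies the limiting operator as $\Gamma[x^n]=a_nx^n$; and since $\deg(T_a[p])\le\deg(p)$ uniformly in $a$, coefficientwise convergence is locally uniform, so Hurwitz's theorem applies, with the possible degeneration to the zero polynomial handled correctly (this degeneration is genuinely possible, e.g.\ when the $a_n$ vanish, so flagging it was the right instinct). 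Two comments on how this sits relative to the cited source and to the rest of the paper. The dilation-degeneration idea---conjugating by $\sigma_a$ so that in the limit the basis $\{a^{-n}B_n(ax)\}_{n=0}^\infty$ collapses onto multiples of the monomials---is essentially the mechanism behind Piotrowski's own proof of the cited theorem, so your route is not fundamentally new; what is different is that you run the limit through the paper's own machinery, namely the representation of Theorem \ref{piotr-thm1} and formula \eqref{form1} of Theorem \ref{thm:dia2}, rather than through expansions in the basis $\{B_n(x)\}_{n=0}^\infty$. This buys something concrete in context: your limiting operator is exactly the operator $\tilde{T}=\sum_{k=0}^\infty \frac{Q_k^{(k)}}{k!}x^kD^k$ of the unnamed theorem that follows Theorem \ref{bnmstocms} in the paper, and since the paper's proof of that theorem invokes Theorem \ref{bnmstocms} precisely to conclude that $\tilde{T}$ is hyperbolicity preserving, your argument establishes both statements at once and would make that part of the paper self-contained rather than dependent on the citation.
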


With the results in the previous section, we are now in a position to extend A. Piotrowski's result (Theorem \ref{bnmstocms}) and provide the exact relationship of the differential representations. 

\begin{thm}
Suppose $T$ is a diagonal differential operator with respect to $\{B_n(x)\}_{n=0}^\infty$ that is also hyperbolicity preserving, 
\beq
T[B_n(x)]:=\left(\sum_{k=0}^\infty Q_k(x)D^k\right)B_n(x)=a_nB_n(x),\ \ \ \ \ n\in\N_0.
\eeq
Then 
\beq
\tilde{T}[x^n]:=\left(\sum_{k=0}^\infty \frac{Q_k^{(k)}}{k!}x^kD^k\right)x^n=a_nx^n,\ \ \ \ \ n\in\N_0, 
\eeq
is a diagonal differential operator with respect to $\{x^n\}_{n=0}^\infty$ and is also hyperbolicity preserving. 
\end{thm}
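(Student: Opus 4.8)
The plan is to treat the statement's two assertions separately: that $\tilde{T}$ is a diagonal differential operator with eigenvalue sequence $\{a_n\}_{n=0}^\infty$ on the monomial basis, and that $\tilde{T}$ preserves hyperbolicity. The first is a direct computation, and the second follows by feeding the categorization theorem through the uniqueness of operator representation.

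First I would verify the eigenvalue relation $\tilde{T}[x^n]=a_n x^n$. Since $D^k[x^n]=\frac{n!}{(n-k)!}x^{n-k}$ for $0\le k\le n$ and vanishes for $k>n$, applying $\tilde{T}$ termwise gives
\[
\tilde{T}[x^n]=\sum_{k=0}^n \frac{Q_k^{(k)}}{k!}\,\frac{n!}{(n-k)!}\,x^n=\left(\sum_{k=0}^n \binom{n}{k}Q_k^{(k)}\right)x^n.
\]
By formula \eqref{form1} of Theorem \ref{thm:dia2}, the bracketed sum is exactly $a_n$, so $\tilde{T}[x^n]=a_n x^n$ for every $n\in\N_0$. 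Because $\deg(Q_k(x))\le k$, each $Q_k^{(k)}$ is a constant and each coefficient $\frac{Q_k^{(k)}}{k!}x^k$ is a genuine polynomial, while $x^n$ has degree $n$ for each $n$; thus Definition \ref{diadiffop} is satisfied and $\tilde{T}$ is a diagonal differential operator with respect to $\{x^n\}_{n=0}^\infty$ with eigenvalue sequence $\{a_n\}_{n=0}^\infty$.

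For hyperbolicity preservation I would exploit the fact that, by the previous paragraph, $\tilde{T}$ is precisely the linear operator determined by $x^n\mapsto a_n x^n$, which is unique by Theorem \ref{piotr-thm1}. Since $T$ is a hyperbolicity preserving diagonal differential operator, the sequence $\{a_n\}_{n=0}^\infty$ is a $\{B_n(x)\}_{n=0}^\infty$ multiplier sequence in the sense of Definition \ref{defhp}. Theorem \ref{bnmstocms} then upgrades this to the assertion that $\{a_n\}_{n=0}^\infty$ is a \emph{classical} multiplier sequence, which by definition means exactly that the operator sending $x^n$ to $a_n x^n$ is hyperbolicity preserving. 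That operator is $\tilde{T}$, so $\tilde{T}$ is hyperbolicity preserving.

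Neither step is a serious obstacle: the eigenvalue computation is a routine unwinding of $\tilde{T}$ on monomials, and the hyperbolicity conclusion is immediate once one recognizes that the coefficient sequence $\left\{\frac{Q_k^{(k)}}{k!}\right\}_{k=0}^\infty$ was engineered precisely so that $\tilde{T}$ realizes the classical multiplier sequence supplied by Theorem \ref{bnmstocms}. The one point demanding care is the logical promotion from ``some operator $x^n\mapsto a_n x^n$ is hyperbolicity preserving'' to ``$\tilde{T}$ is hyperbolicity preserving''; this is exactly where the uniqueness in Theorem \ref{piotr-thm1} is needed, since it forces that abstract operator to coincide with our explicitly constructed $\tilde{T}$.
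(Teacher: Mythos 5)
Your proposal is correct and follows essentially the same route as the paper: both verify via formula \eqref{form1} of Theorem \ref{thm:dia2} that the explicit operator $\sum_{k=0}^\infty \frac{Q_k^{(k)}}{k!}x^kD^k$ sends $x^n$ to $a_nx^n$, and both invoke Theorem \ref{bnmstocms} to conclude that the diagonal operator $x^n\mapsto a_nx^n$ preserves hyperbolicity, identifying it with $\tilde{T}$ by agreement on the monomial basis. The only cosmetic difference is that you cite the uniqueness clause of Theorem \ref{piotr-thm1} for that identification where the paper simply appeals to linearity; these amount to the same thing.
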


\begin{proof}
By Theorem \ref{bnmstocms}, it is already know that if $T[B_n(x)]:=a_nB_n(x)$ is hyperbolicity preserving, then $\tilde{T}[x^n]:=a_n x^n$ must also be hyperbolicity preserving. Thus, the new aspect of this theorem is the discovery of the exact differential form of $\tilde{T}$. That is, we wish to show, 
\beq
\tilde{T}=\sum_{k=0}^\infty \frac{Q_k^{(k)}}{k!}x^kD^k. 
\eeq
Define, 
\beq
W:=\sum_{k=0}^\infty \frac{Q_k^{(k)}}{k!}x^kD^k. 
\eeq
Since $T[B_n(x)]=a_nB_n(x)$ for every $n\in\N_0$, then by Theorem \ref{thm:dia2}, for each $n\in\N_0$,
\beq
a_n=\sum_{k=0}^n \binom{n}{k} Q_k^{(k)}. 
\eeq
Thus, 
\beq
W[x^n]=\left(\sum_{k=0}^\infty \frac{Q_k^{(k)}}{k!}x^kD^k\right)x^n = \sum_{k=0}^n \frac{Q_k^{(k)}}{k!}x^k \frac{n!}{(n-k)!} x^{n-k} = \left(\sum_{k=0}^n \binom{n}{k} Q_k^{(k)}\right)x^n = a_n x^n. 
\eeq
Hence, $W$ and $\tilde{T}$ agree on the basis $\{x^n\}_{n=0}^\infty$. So, by linearity, $W=\tilde{T}$. 
\end{proof}

Similar to Section \ref{section1}, we now begin some work on the uniqueness of the differential representation for hyperbolicity preserving operators. In Theorem \ref{unique}, it is realized that non-repeating eigenvalues (i.e., $a_n\not=a_m$ for $n\not=m$) gives us unique eigenvectors (up to a multiple). In particular, strictly increasing eigenvalues must have unique eigenvectors (up to a multiple). Hence, the following case of hyperbolicity preservers is of interest. 

\begin{thm}
Suppose $T$ is a hyperbolicity preserving diagonal differential operator, 
\beq
T[B_n(x)]=\left(\sum_{k=0}^\infty Q_k(x)D^k\right)B_n(x) = a_n B_n(x),\ \ \ \ \ n\in\N_0,
\eeq
where $\{a_n\}_{n=0}^\infty$, $a_0>0$, is interpolatable by a non-constant polynomial. Then $T$ must have strictly increasing eigenvalues. 
\end{thm}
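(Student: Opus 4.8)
The plan is to reduce to the classical theory via Theorem \ref{bnmstocms}: since $T$ is a hyperbolicity preserving diagonal differential operator, $\{a_n\}_{n=0}^\infty$ is a $\{B_n(x)\}_{n=0}^\infty$ multiplier sequence and hence a classical multiplier sequence. Write $p$ for the non-constant interpolating polynomial, so $a_n=p(n)$ with $d:=\deg p\ge 1$. I would then argue in three stages: nonnegativity, strict positivity, and finally strict monotonicity. For \emph{nonnegativity}, I would apply Theorem \ref{thm:mslist}(1), which says $\{a_n\}$ must coincide with one of the four sign patterns $\{|a_n|\}$, $\{-|a_n|\}$, $\{(-1)^n|a_n|\}$, $\{(-1)^{n+1}|a_n|\}$. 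Because $a_n=p(n)$ with $p$ non-constant, for all large $n$ we have $a_n\ne 0$ and $\operatorname{sign}(a_n)$ equals the (fixed) sign of the leading coefficient of $p$; this rules out the two alternating patterns. Since $a_0=p(0)>0$ excludes the non-positive pattern, we conclude $a_n\ge 0$ for every $n$, the leading coefficient of $p$ is positive, and in particular $a_n=p(n)\to+\infty$.

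For \emph{strict positivity} I would pass to the generating function $\Phi(x):=\sum_{k=0}^\infty \frac{a_k}{k!}x^k$. Using $a_k=p(k)$ together with the falling-factorial expansion of $p$ and the identity $\sum_{k}\frac{k^{\underline{j}}}{k!}x^k=x^j e^x$, one checks that $\Phi(x)=e^x q(x)$ for a polynomial $q$ with $\deg q=d\ge 1$; thus $\Phi$ is transcendental entire and $\Phi(0)=a_0>0$. By the P\'olya--Schur characterization of multiplier sequences (see \cite{CC89}), $\Phi$ is a locally uniform limit of polynomials with only real zeros all of the same sign, and since $\{a_k\}$ is nonnegative these zeros are negative. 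Remark \ref{remark} then applies verbatim and forces every Taylor coefficient of $\Phi$ to be nonzero, so $a_k\ne 0$ for all $k$; combined with the previous stage this yields $a_n>0$ for every $n$.

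For \emph{strict monotonicity}, with all $a_n>0$ the Tur\'an inequalities of Theorem \ref{thm:mslist}(2), $a_{n+1}^2\ge a_n a_{n+2}$, say precisely that the ratios $r_n:=a_{n+1}/a_n$ are non-increasing. Since $a_n=p(n)$ with $\deg p=d$, we have $r_n=p(n+1)/p(n)\to 1$, and a non-increasing sequence converging to $1$ satisfies $r_n\ge 1$, i.e. $a_{n+1}\ge a_n$ for all $n$. To get strictness, note that if $r_{n_0}=1$ for some $n_0$, then $1\le r_m\le r_{n_0}=1$ for every $m\ge n_0$, so $\{a_m\}$ would be eventually constant, contradicting $a_n\to+\infty$; hence $r_n>1$ for all $n$ and the eigenvalues are strictly increasing.

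I expect the main obstacle to be the strict positivity stage. The Tur\'an inequalities alone do not preclude a vanishing eigenvalue: if $a_{n_0}=0$ with $a_{n_0-1}>0$, the inequality at $n_0-1$ only forces $a_{n_0+1}=0$ and then becomes vacuous, so it cannot propagate to a contradiction by itself. Ruling out zeros genuinely requires the generating-function reformulation and the entire-function statement recorded in Remark \ref{remark}. The secondary delicate point is the limiting argument in Stage 3, where one should exploit monotonicity of the ratios together with $r_n\to 1$ rather than attempting to prove directly that $p$ is increasing on $[0,\infty)$.
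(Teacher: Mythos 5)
Your proposal is correct, and its opening stage is identical to the paper's: reduce to a classical multiplier sequence via Theorem \ref{bnmstocms}, then use the sign-pattern trichotomy of Theorem \ref{thm:mslist} together with $a_0>0$ and the unboundedness of a non-constant interpolating polynomial to get $a_n\ge 0$ and $a_n\to+\infty$. Where you diverge is in how the Tur\'an inequalities are exploited. The paper finishes in one stroke, working with \emph{differences}: if $a_k\ge a_{k+1}$ for some $k$, then Tur\'an propagates non-increasingness forward ($a_{n+1}\ge a_{n+2}$ for all $n\ge k$), so the sequence stays bounded, contradicting $a_n\to+\infty$; hence $a_k<a_{k+1}$ for every $k$, with no separate positivity step. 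You instead insert a positivity stage (writing $\sum_{k}a_kx^k/k!=e^xq(x)$ and invoking P\'olya--Schur plus Remark \ref{remark}) and then run Tur\'an on the \emph{ratios} $r_n=a_{n+1}/a_n$, using their monotonicity and $r_n\to 1$. The trade-off is instructive: the paper's argument is shorter and avoids generating functions entirely, but its propagation step --- passing from $a_n\ge a_{n+1}$ and $a_{n+1}^2\ge a_na_{n+2}$ to $a_{n+1}\ge a_{n+2}$ --- requires dividing by $a_{n+1}$, so it silently assumes no two consecutive eigenvalues vanish; this is exactly the degenerate case you flag in your closing paragraph (Tur\'an becomes vacuous once two consecutive terms are zero, e.g.\ $3,0,0,7,\dots$ satisfies all the inequalities). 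Your Stage 2 closes that hole explicitly, at the cost of heavier machinery, and in the same spirit as the paper's own generating-function argument in the proof of Theorem \ref{final}. A cheaper patch, closer to the paper's toolkit, is to note that a nonnegative classical multiplier sequence has no internal zeros: apply the sequence to $(1+x)^N$ with $a_N>0$ and use the polynomial statement in Remark \ref{remark}; combined with $a_0>0$ and $a_n\to+\infty$ this already forces all $a_n>0$, after which either your ratio argument or the paper's difference argument runs without exceptions.
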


\begin{proof}
Multiplier sequences are either all non-negative, non-positive, or alternate in sign (Theorem \ref{thm:mslist} item (1)). Since $a_0>0$ and $\{a_n\}_{n=0}^\infty$ is interpolated by a non-constant polynomial (i.e., $a_n\to\infty$ or $a_n\to-\infty$), then $a_n\ge 0$ for all $n\in\N_0$. Furthermore, if $a_k\ge a_{k+1}$ for some $k$, then by the Tur\'an inequalities (Theorem \ref{thm:mslist} item (2)), $a_{n+1}\ge a_{n+2}$ for all $n\ge k$. Thus, since $\{a_n\}_{n=0}^\infty$ is interpolated by a non-constant polynomial (again, either $a_n\to\infty$ or $a_n\to-\infty$), then we must have $a_k<a_{k+1}$ for every $k\in\N_0$. 
\end{proof}

An analogous result can be formulated for operators that diagonalize on a basis, $\{B_n(x)\}_{n=0}^\infty$, where for each $n\in\N_0$, $B_n(x)$ has real simple zeros (see \cite{FTW11}). We also note that all finite order diagonal differential operators have polynomial interpolated eigenvalues (Theorem \ref{cor:intercon}). Thus, we find that many hyperbolicity preserving operators have a unique diagonalization (up to a multiple, see Theorem \ref{unique}). Of course, it should be noted that Example \ref{ex:notsame} serves to show that there are hyperbolicity preserving operators that do not have unique diagonalizations. 

Increasing eigenvalues actually provide even further restrictions on the differential representation of hyperbolicity preserving diagonal differential operators (cf. Theorem \ref{thm:degqk}). 

\begin{thm}\label{final}
Suppose $T$ is a hyperbolicity preserving diagonal differential operator, 
\beq
T[B_n(x)]=\left(\sum_{k=0}^\infty Q_k(x)D^k\right)B_n(x) = a_n B_n(x),\ \ \ \ \ n\in\N_0, 
\eeq
where $0<a_n<a_{n+1}$ for each $n\in\N_0$. If $\{a_n\}_{n=0}^\infty$ is interpolated by a polynomial, $p(x)$, $\deg(p(x))=m$, then $\deg(Q_k(x))=k$ for all $0\le k\le m$. Likewise, if $\{a_n\}_{n=0}^\infty$ cannot be interpolated by a polynomial, then $\deg(Q_k(x))=k$ for every $k\in\N_0$. 
\end{thm}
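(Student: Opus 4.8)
The plan is to reduce everything to the monomial basis and then read the degrees off the zeros of an auxiliary entire function. Writing $g_k:=Q_k^{(k)}$, recall from the proof of Theorem \ref{thm:dia2} that $\deg(Q_k(x))\le k$, so $\deg(Q_k(x))=k$ is equivalent to $g_k\ne 0$; thus the whole theorem is a statement about which constants $g_k$ vanish. By Theorem \ref{bnmstocms} the sequence $\{a_n\}_{n=0}^\infty$ is a classical multiplier sequence, and formula \eqref{form1} of Theorem \ref{thm:dia2} reads $a_n=\sum_{k=0}^n\binom{n}{k}g_k$. I would encode this in generating functions by setting $A(x):=\sum_{n=0}^\infty \tfrac{a_n}{n!}x^n$ and $G(x):=\sum_{k=0}^\infty\tfrac{g_k}{k!}x^k$; a routine Cauchy-product computation turns the binomial relation into the single identity $A(x)=e^xG(x)$, i.e. $G(x)=e^{-x}A(x)$.

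Next I would invoke the P\'olya--Schur theory of multiplier sequences: since $\{a_n\}_{n=0}^\infty$ is a classical multiplier sequence with every $a_n>0$, the function $A$ lies in the Laguerre--P\'olya class $\lp$ and, having only positive Taylor coefficients, admits the representation $A(x)=a_0 e^{\sigma x}\prod_{j}\lr{1+x/\alpha_j}$ with $\sigma\ge 0$, $\alpha_j>0$, and $\sum_j 1/\alpha_j<\infty$. Substituting into $G=e^{-x}A$ gives $G(x)=a_0 e^{(\sigma-1)x}\prod_j\lr{1+x/\alpha_j}$, so the finite zeros of $G$ are exactly the negative numbers $-\alpha_j$. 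The crucial point, however, is the sign of the exponential coefficient $\sigma-1$: the function $G$ belongs to the subclass $\lp^{+}$ of local uniform limits of polynomials with only negative zeros precisely when $\sigma-1\ge 0$, and it is only for such $G$ that Remark \ref{remark} forbids vanishing Taylor coefficients.

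Establishing $\sigma\ge 1$ is the heart of the matter, and this is where $0<a_n<a_{n+1}$ enters. By the Tur\'an inequalities (Theorem \ref{thm:mslist} item (2)) the ratios $a_{n+1}/a_n$ are non-increasing, and strict monotonicity forces each to exceed $1$; hence $L:=\lim_{n\to\infty}a_{n+1}/a_n$ exists and satisfies $L\ge 1$. On the other hand $L=\lim_n a_n^{1/n}$ is exactly the exponential type of $A$, and since the genus-zero canonical product $\prod_j(1+x/\alpha_j)$ with $\sum_j 1/\alpha_j<\infty$ has exponential type $0$, the type of $A=a_0e^{\sigma x}\prod_j(1+x/\alpha_j)$ equals $\sigma$. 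Therefore $\sigma=L\ge 1$, as needed. I expect this identification of the limiting ratio with the type $\sigma$ (together with the fact that the canonical product contributes type zero) to be the main analytic obstacle; everything else is bookkeeping.

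With $\sigma\ge 1$ in hand, $G(x)=a_0e^{(\sigma-1)x}\prod_j(1+x/\alpha_j)\in\lp^{+}$ and $G(0)=a_0\ne 0$. For the non-interpolatable case, Theorem \ref{thm:degqk} guarantees $g_k\ne 0$ for infinitely many $k$, so $G$ is transcendental; Remark \ref{remark} then shows every Taylor coefficient of $G$ is nonzero, i.e. $g_k\ne 0$ and $\deg(Q_k(x))=k$ for all $k\in\N_0$. For the interpolatable case with $\deg(p(x))=m$, Theorem \ref{thm:degqk} already gives $g_k=0$ for $k>m$ and $g_m\ne 0$, so $G$ is a degree-$m$ polynomial lying in $\lp^{+}$; such a polynomial has only negative real zeros, and by Remark \ref{remark} none of its coefficients vanish, whence $g_k\ne 0$ and $\deg(Q_k(x))=k$ for every $0\le k\le m$. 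This would complete both halves of the theorem.
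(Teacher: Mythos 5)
Your proof is correct, but it takes a genuinely different route at the single step where all the analytic content lives. The paper, after reducing (exactly as you do) to the function $f(x)=e^{-x}\sum_{k=0}^\infty \frac{a_k}{k!}x^k=\sum_{k=0}^\infty \frac{Q_k^{(k)}}{k!}x^k$ and invoking Theorem \ref{bnmstocms}, simply cites T. Craven and G. Csordas \cite{CC83} for the fact that an increasing positive multiplier sequence makes $f$ approximable by polynomials with only real negative zeros, and then finishes with Theorem \ref{thm:degqk} and Remark \ref{remark} exactly as you do. You instead re-derive that cited lemma: from the P\'olya--Schur representation $A(x)=a_0e^{\sigma x}\prod_j\left(1+x/\alpha_j\right)$, the Tur\'an inequalities give non-increasing ratios $a_{n+1}/a_n>1$, hence a limit $L\ge 1$; identifying $L$ with the exponential type of $A$ (via the coefficient formula for type and the fact that a genus-zero canonical product has type zero) yields $\sigma=L\ge 1$, so $G=e^{-x}A$ is again of the required form. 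This is sound, and each ingredient (Stirling, the type formula, type additivity here via the lower bound $A(r)\ge a_0e^{\sigma r}$ on the positive axis) checks out. What the paper's route buys is brevity; what yours buys is independence from \cite{CC83} and transparency about where the hypothesis $0<a_n<a_{n+1}$ actually enters --- it is precisely what forces $\sigma\ge 1$. Two minor points: the P\'olya--Schur representation theorem is nowhere stated in the paper, so your argument still rests on an external classical citation, just a different (and more standard) one; and in the interpolated case your type argument is superfluous, since once Theorem \ref{thm:degqk} makes $G$ a polynomial, the factorization $G=a_0e^{(\sigma-1)x}\prod_j\left(1+x/\alpha_j\right)$ already forces the product to be finite and $\sigma=1$, whence the zeros are negative and Remark \ref{remark} applies directly.
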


\begin{proof}
By Theorem \ref{bnmstocms}, $\{a_n\}_{n=0}^\infty$ is a classical multiplier sequence. Since $\{a_n\}_{n=0}^\infty$ is increasing, then by T. Craven and G. Csordas \cite{CC83}, 
\beq
f(x):=e^{-x}\left(\sum_{k=0}^\infty \frac{a_k}{k!}x^k\right)=\sum_{n=0}^\infty \frac{1}{n!}\left(\sum_{k=0}^n \binom{n}{k} a_k (-1)^{n-k}\right)x^n = \sum_{k=0}^\infty \frac{Q_k^{(k)}}{k!} x^k 
\eeq
is approximable by polynomials with only real negative zeros (see also \cite{PS14}). If $\{a_n\}_{n=0}^\infty$ is interpolated by a polynomial, $p(x)$, $\deg(p(x))=m$, then by Theorem \ref{thm:degqk}, $Q_m^{(m)}\not=0$ and $Q_k^{(k)}=0$ for $k>m$. Hence, $f(x)$ is a polynomial of degree $m$ with only real negative zeros. Therefore, by Remark \ref{remark}, $Q_k^{(k)}\not=0$ for each $0\le k\le m$. Similarly, if $\{a_n\}_{n=0}^\infty$ is not interpolatable by a polynomial, then by Theorem \ref{thm:degqk}, $Q_k^{(k)}\not=0$ for infinitely many $k$. Hence, $f(x)$ is a transcendental entire function that is approximable by polynomials with only real negative zeros (furthermore, since $a_0\not=0$, $Q_0^{(0)}\not=0$, so, $f(x)$ has no zeros at the origin). Therefore, by Remark \ref{remark}, $Q_k^{(k)}\not=0$ for all $k\in\N_0$. 
\end{proof}

\hspace{.5in}

\section{Acknowledgments}

The author would like to formally thank the members of Dr. Csordas' Functional Analysis seminar, where many fruitful discussions and arguments took place in the preparation of this paper. In particular, thanks is extended to Dr. Forg\'acs and Mr. Hallum for listening to hours of the author's mathematical rants. Special thanks goes to Dr. Csordas, whose advice, critiques, guidance, and criticisms are greatly appreciated. 

\hspace{1in}

\printbibliography

\end{document}